\newtheorem{theorem}{Theorem}[section]
\newtheorem{proposition}[theorem]{Proposition}
\newtheorem{lemma}[theorem]{Lemma}
\theoremstyle{definition}
\newtheorem{example}[theorem]{Example}
\DeclareMathOperator{\re}{Re}
\DeclareMathOperator{\spanned}{span}
\title{The Dirichlet-Bohr radius}
\author[D. Carando]{Daniel Carando}
\address{ Departamento de Matem\'atica, Fac. C. Exactas y Naturales, Universidad de Buenos Aires, Pab I,
Ciudad Universitaria, 1428, Buenos Aires, Argentina and IMAS - CONICET} \email{dcarando@dm.uba.ar}
\author[A. Defant]{ Andreas Defant}
\address{Institut f\"ur Mathematik, Universit\"at Oldenburg, D--$26111$, Oldenburg,
Germany} \email{defant@mathematik.uni-oldenburg.de}
\author[D. Garc\'{\i}a]{Domingo Garc\'{\i}a}
\address{Departamento de An\'{a}lisis Matem\'{a}tico,
Universidad de Valencia, Doctor Moliner $50$, $46100$ Burjasot (Valencia), Spain}
\email{domingo.garcia@uv.es}
\author[M. Maestre]{Manuel Maestre}
\address{Departamento de An\'{a}lisis Matem\'{a}tico,
Universidad de Valencia, Doctor Moliner $50$, $46100$ Burjasot (Valencia), Spain}
\email{manuel.maestre@uv.es}
\author[P. Sevilla-Peris]{Pablo Sevilla-Peris}
\address{Instituto Universitario de Matem\'atica Pura y Aplicada. Universitat Polit\`ecnica de Val\`encia, Valencia, Spain} \email{psevilla@mat.upv.es}
\thanks{The first author was partially supported by CONICET-PIP 0624,
UBACyT Grant 1-746 and  ANPCyT PICT 2011-1456. The  third, fourth and fifth
authors were supported by MICINN  MTM2011-22417. The third and fourth also by Prometeo II/2013/013. The fifth author was also partially supported by UPV-SP20120700}
\keywords{Dirichlet series, Bohr radius, Holomorphic functions} \subjclass[2010]{
  11M41, 30B50, 11M36}
\date{\today}
\begin{document}

\begin{abstract}
\noindent   Denote by $\Omega(n)$  the number of prime divisors of $n \in \mathbb{N}$
(counted with multiplicities).
 For $x\in \mathbb{N}$ define the Dirichlet-Bohr radius $L(x)$ to be  the best  $r>0$ such that for every finite Dirichlet polynomial $\sum_{n \leq x} a_n n^{-s}$ we have
$$
\sum_{n \leq x}  |a_n| r^{\Omega(n)} \leq  \sup_{t\in \mathbb{R}} \big|\sum_{n \leq x} a_n n^{-it}\big|\,.
$$
We prove that  the asymptotically correct order of $L(x)$ is $ (\log x)^{1/4}x^{-1/8} $.
Following  Bohr's vision our proof links the estimation of $L(x)$  with classical Bohr radii for holomorphic functions in several variables. Moreover, we suggest a general setting which   allows to translate various results on Bohr radii in a systematic way into results on Dirichlet-Bohr radii, and vice versa.
\end{abstract}

\maketitle

\section{Introduction}
\noindent The study of problems on absolute convergence of Dirichlet series (of the form $\sum_{n} a_{n} n^{-s}$, where $s$ is a complex variable) led H.~Bohr to relate properties on absolute convergence with properties of boundedness (on the right half plain)
of the holomorphic function defined by the Dirichlet series. One of his first results in this direction is the following inequality \cite[Satz~XIII]{Bo13_Goett}: for every Dirichlet series of the form $\sum_{p \text{ prime}} a_p p^{-s}$ we have
\begin{equation} \label{Bohr's inequality}
\sum_{p \text{ prime}} |a_p| \leq \sup_{\re s >0}\Big| \sum_{p \text{ prime}} a_p p^{-s} \Big|\,.
\end{equation}
In his research \cite{Bo13_Goett,Bo13} he then established  a close relationship between Dirichlet series and power series in infinitely many variables (this relationship was presented in a modern, systematic way much later by Hedenmalm, Lindqvist and Seip
\cite{HeLiSe97}). Bohr then looked at holomorphic functions and proved his well known power series theorem \cite{Bo14}: for every holomorphic function $f$ on the open unit disc $\mathbb{D}$ we have
\begin{equation} \label{Bohr's power series theorem}
\sum_n  \big|\frac{f^{(n)}(0)}{n!}\big|  \frac{1}{3^n}  \leq \|f\|_\infty\,,
\end{equation}
and that here moreover the number $1/3$ is optimal. As a simple consequence of the maximum modulus principle, it can be seen that for each Dirichlet series $\sum_{n} a_{2^n} 2^{-ns}$ we have
\[
\sup_{z \in \mathbb{D} }
\Big|
\sum_{n} a_{2^n} z^n
\Big|
=
\sup_{\re s >0}
\Big|
\sum_{n} a_{2^n} 2^{-ns}\Big| \,.
\]
Hence \eqref{Bohr's power series theorem} can be reformulated as follows: for each Dirichlet series  $ \sum_{n} a_{2^n} 2^{-ns}$
\begin{equation} \label{motherAAA}
\sum_{n} \Big|a_{2^n} \frac{1}{3^n}\Big|  \leq   \sup_{\re s >0}\Big| \sum_{n} a_{2^n} 2^{-ns} \Big|\,.
\end{equation}
The work of Dineen and Timoney \cite{DiTi89} renewed the interest on Bohr's theorem and Boas and Khavinson \cite{BoKh97} defined the $n$-dimensional Bohr radius $K_{n}$ to be the best $0<r<1$ such that
\[
\sum_{\alpha \in \mathbb{N}_{0}^{n}}  \Big\vert  \frac{\partial^{\alpha} f(0)}{\alpha !} \Big\vert r^{\vert \alpha \vert}
\leq \sup_{z \in \mathbb{D}^{n}} \Big\vert \sum_{\alpha \in \mathbb{N}_{0}^{n}}   \frac{\partial^{\alpha} f(0)}{\alpha !} z^{\alpha} \Big\vert \,,
\]
for every bounded, holomorphic function $f$ on $\mathbb{D}^{n}$. That was the starting point of a long search on the optimal asymptotic behaviour of $K_{n}$ as $n$ grows that was finally closed in \cite{DeFrOrOuSe11} and \cite{BaPeSe13} (see Section~\ref{generales} for more details).\\

Because of the link between Dirichlet series and power series,  each result in either framework has an immediate translation into the other. This is of course the case with the behaviour of $K_{n}$ (a fact which is stated in more detail in Example~\ref{motherAA}). But,
as it happens, what is natural in one side may not be as natural in the other; and while taking $n$ variables (or, equivalently, $n$-dimensional spaces) is natural in the side of holomorphic functions, in the side of Dirichlet series we would rather take finite sums of (the first) $n$~terms.\\
So, inspired by the Bohr radius for holomorphic functions, our main aim in this note is to determine, for each $x \geq 2$, the best  $r=r(x) \ge 0$ such that for every finite Dirichlet polynomial $ \sum_{n \leq x} a_{n} n^{-s} $ of length $x$
\[
\sum_{n \leq x} \vert a_{n} \vert r^{\Omega(n)} \leq \sup_{\re s >0}\Big| \sum_{n \leq x} a_{n} n^{-s} \Big|\,,
\]
where $\Omega(n)$ denotes  the number of prime divisors of $n \in \mathbb{N}$ (counted with multiplicities).
We do this in our main result Theorem~\ref{main}, that gives  the asymptotically correct order of this best radius.

We then take a general point of view and, for a given subset $J$ of  $\mathbb{N}$, we define the {\it Dirichlet-Bohr radius} $L(J)$ of $J$ to be the  best $ r=r(J)\ge 0 $ such that for every Dirichlet series $ \sum_{n \in J} a_{n} n^{-s} $ convergent on
the open half-plane $[\re  s>0]$, we have
\begin{equation}\label{ladefinition}
\sum_{n \in J} \vert a_{n} \vert r^{\Omega(n)} \leq \sup_{\re s >0}\Big| \sum_{n \in J} a_{n} n^{-s} \Big| \,.
\end{equation}
\noindent With this, denoting by $P$ the set of prime numbers, \eqref{Bohr's inequality} and \eqref{motherAAA} can be rephrased as
\begin{align} \label{o'flynn}
L(P)=1  && \text{and} && L\big(\big\{ 2^k \,|\, k \in \mathbb{N}  \big\}\big) = \frac{1}{3}\,.
\end{align}
Then, Theorem~\ref{main} gives the correct asymptotic order of $L(\{ n \in \mathbb{N}\,| 1 \leq n \leq x \})$. We will see that, following an idea of H. Bohr based on Diophantine approximation, this study can be extended to other sets $J$ of indices.\\

Finally, we mention another estimate which seems of relevance when motivating our results: For every
$\varepsilon >0$ there is $C=C(\varepsilon) \ge 1$ such that for every $x$ and finite Dirichlet polynomial
$ \sum_{n \leq x} a_{n} n^{-s} $
\begin{equation} \label{monster}
\sum_{n \leq x} \vert a_n \vert  \frac{  e^{ \big(\frac{1}{\sqrt{2}} - \varepsilon \big)\sqrt{\log n\log \log n}}}{n^{1/2}}\,
\,
\leq\,  C\,\sup_{\re s >0}\Big| \sum_{n \leq x} a_{n} n^{-s} \Big|\,.
\end{equation}
This result is under several different aspects optimal, and  it is the final outcome of a long series of results due to\cite{BaCaQu06,Br08,DeFrOrOuSe11,KoQu01,Qu95,QuQu13}. Our main result, Theorem~\ref{main}, can be considered to be a relative  of \eqref{monster}.

\subsection{Notations} As we have already mentioned, $\Omega (n)$ denotes, for $n \in \mathbb{N}$, the number of prime divisors of $n$, counted with their multiplicity. We denote by $(p_{n})_{n}$ the sequence of prime numbers. The set
of multiindices $\alpha$ that eventually become $0$ is denoted by $\mathbb{N}_{0}^{(\mathbb{N})}$. For $\alpha= (\alpha_{1}, \ldots , \alpha_{k}, 0, \ldots)$ we write $p^{\alpha} = p_{1}^{\alpha_{1}} \cdots p_{k}^{\alpha_{k}}$ and
$\vert \alpha \vert = \alpha_{1} + \cdots + \alpha_{k}$.\\
Along this note $\pi$ denotes the prime counting function, i.e.,  $\pi(x)$ is the number of prime numbers less than or equal to $x$.\\
Given two real functions $f$ and $g$ we write $f(x) \ll g(x)$ if there exist a constant $C>0$ such that $f(x) \leq C g(x)$ for every $x$. If $f(x) \ll g(x)$ and $g(x) \ll f(x)$  we write $f(x) = O(g(x))$.\\
For each $N$ we denote by $H_{\infty}(\mathbb{D}^{N})$ the space of bounded, holomorphic functions on $\mathbb{D}^{N}$. If $f \in H_{\infty}(\mathbb{D}^{N})$ and $\alpha \in \mathbb{N}_{0}^{N}$ we write $c_{\alpha}(f)
= \frac{\partial^{\alpha} f(0)}{\alpha !}$, the $\alpha$-th coefficient of the monomial expansion.

\section{Main result}
\noindent For any $x \geq 2$, we write
$$
L(x) = L\big(\big\{ n \in \mathbb{N}\,\big| \, 1 \leq n \leq  x \big\}  \big)\,,
$$
where $L$ is defined in \eqref{ladefinition}, and call this number the $x$-th Dirichlet-Bohr radius. The main result of this note then reads as follows.
\begin{theorem} \label{main} We have
\[
L(x) = O \left( \frac{\sqrt[4]{\log x}}{x^{1/8}}   \right)\,.
\]
In particular, there is a universal constant $C>0$  such that
 \[
\sum_{n \leq x} \vert a_{n}\vert \left(\frac{C\sqrt[4]{\log n}}{n^{1/8}}\right)^{\Omega(n)} \leq
\sup_{\re s >0}\Big| \sum_{n \leq x} a_{n} n^{-s} \Big|\,
\]for every $x \geq 2$ and every
 finite Dirichlet polynomial $ \sum_{n \leq x} a_{n} n^{-s} $.
\end{theorem}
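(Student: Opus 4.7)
The plan is to push the problem through Bohr's transform: the Dirichlet polynomial $D(s) = \sum_{n \leq x} a_n n^{-s}$ corresponds to the polynomial $F(z) = \sum_{n \leq x} a_n z^{\nu(n)}$ in $N := \pi(x)$ complex variables (indexed by primes $\leq x$) of degree at most $M := \lfloor \log_2 x \rfloor$, with $\sup_{\re s > 0}|D| = \sup_{z \in \mathbb{D}^N}|F|$ (Hedenmalm-Lindqvist-Seip). Decomposing into homogeneous pieces $F = \sum_{m=0}^M F_m$, the standard averaging over a single-variable circle yields $\|F_m\|_\infty \leq \|F\|_\infty$, so it suffices to estimate $\sum_{\Omega(n)=m,\,n\leq x}|a_n|$ in terms of $\|F_m\|_\infty$ and then sum the results weighted by $r^m$.

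For each $m \geq 2$, the Bohnenblust-Hille inequality applied to $F_m$ combined with H\"older over the $\pi_m(x) := \#\{n \leq x : \Omega(n)=m\}$ available monomials produces a bound of the form $\sum_{\Omega(n)=m,\,n\leq x}|a_n| \leq A_m(x)\,\|F_m\|_\infty$ with $A_m(x) \ll \pi_m(x)^{(m-1)/(2m)}$, where Landau's theorem controls $\pi_m(x) \sim x(\log\log x)^{m-1}/((m-1)!\log x)$. A short calculation reveals that, at $r = c(\log x)^{1/4}/x^{1/8}$, the $x$-exponent of $r^m\pi_m(x)^{(m-1)/(2m)}$ equals exactly $-(m-2)^2/(8m)$: it vanishes precisely at $m = 2$ and is strictly negative for every $m \geq 3$, so all higher homogeneity levels contribute $o(1)$ as $x \to \infty$. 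This pinpoints the critical homogeneity $m=2$ and forces the overall rate $x^{-1/8}$.

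The main obstacle is extracting the correct $(\log x)^{1/4}$ factor at $m=2$, since the crude Littlewood $4/3$-plus-H\"older bound gives only $A_2(x) \ll x^{1/4}/(\log x)^{1/4}$, off by $(\log x)^{1/4}$ from what is needed. I would close this gap by invoking the sharpened Bohnenblust-Hille inequality with subgaussian polynomial constants that drove the sharp asymptotic $K_N \asymp \sqrt{\log N/N}$ for the classical Bohr radius (Defant-Frerick-Ortega-Cerd\`a-Ouna\"ies-Seip and Bayart-Pellegrino-Seoane-Sevilla-Peris), so that the improved bound $A_2(x) \ll x^{1/4}/(\log x)^{1/2}$ yields $r^2 A_2(x) = O(1)$ and hence $L(x) \gg (\log x)^{1/4}/x^{1/8}$. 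For the matching direction $L(x) \ll (\log x)^{1/4}/x^{1/8}$, I would exhibit an extremal $2$-homogeneous Dirichlet polynomial supported on $\{n \leq x : \Omega(n) = 2\}$, produced either by saturating the refined Bohnenblust-Hille bound on the sparse set of monomials $\{z_p z_q : pq \leq x\}$ or by a Kahane-Salem-Zygmund random-signs construction on $\mathbb{D}^{\pi(x)}$ combined with hypercontractivity, so that the ratio of the coefficient $\ell^1$-norm to the supremum norm matches the predicted order.
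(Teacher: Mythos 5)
Your overall architecture coincides with the paper's: transfer via Bohr's lift to a polynomial in $\pi(x)$ variables, split into homogeneous parts, identify $m=2$ as the critical homogeneity, and prove the upper bound $L(x)\ll (\log x)^{1/4}x^{-1/8}$ by a $2$-homogeneous extremal example (your Kahane--Salem--Zygmund variant does work: with unimodular coefficients on the monomials $z_pz_q$, $p,q\le\sqrt{x}$, one gets $\sup$-norm $\lesssim \pi(\sqrt{x})^{3/2}$ against coefficient sum $\asymp\pi(\sqrt{x})^{2}$, exactly the count the paper obtains deterministically from a $q\times q$ unimodular matrix with orthogonal columns). The genuine gap is at the decisive $m=2$ step of the lower bound. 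You propose to obtain $A_2(x)\ll x^{1/4}/(\log x)^{1/2}$ by invoking the sharpened Bohnenblust--Hille inequality with subgaussian constants. Those refinements improve only the dependence of the constant on the degree $m$; they are what one needs for $K_N\asymp\sqrt{(\log N)/N}$ because there the optimization runs over $m\asymp\log N$. At fixed $m=2$ the inequality is Littlewood's with an absolute constant, and Bohnenblust--Hille plus H\"older over the support set can never give better than $\pi_2(x)^{1/4}\asymp x^{1/4}(\log\log x)^{1/4}/(\log x)^{1/4}$ --- the very bound you computed to be insufficient, since then $r^2A_2(x)\to\infty$ at $r=c(\log x)^{1/4}x^{-1/8}$. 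The missing $(\log x)^{1/4}$ is arithmetic, not a matter of constants: the paper (re-elaborating Balasubramanian--Calado--Queff\'elec) uses the mixed $\ell^1(\ell^2)$ polynomial inequality
\[
\sum_{j_2}\Big(\sum_{j_1\le j_2}|c_{j_1 j_2}|^2\Big)^{1/2}\;\le\; C\,\|P\|_\infty,
\]
then Cauchy--Schwarz against the weight $p_{j_1}^{-1/4}$ together with $\sum_{p\le y}p^{-1/2}\ll y^{1/2}/\log y$, which yields the weighted estimate $\sum_{n\le x}|a_n|(\log n)^{1/2}n^{-1/4}\ll\|D\|_\infty$ and hence $A_2(x)\ll x^{1/4}/(\log x)^{1/2}$ via monotonicity of $(\log t)^{1/2}t^{-1/4}$. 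Some such weighted estimate exploiting the sparsity of the primes is indispensable; no sharpening of the Bohnenblust--Hille constant supplies it, so your plan stalls precisely at the step that produces the theorem's $\sqrt[4]{\log x}$.

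A secondary point: ``sum the homogeneous estimates weighted by $r^m$'' does not by itself give the inequality with constant exactly $1$ demanded by the definition of $L(x)$, because the $m=0$ (and $m=1$) terms already exhaust the budget: the naive triangle inequality only yields $\sum_n|a_n|r^{\Omega(n)}\le C\|F\|_\infty$ with some $C>1$, and shrinking $r$ does not repair this. The paper handles it with the classical Bohr--Carath\'eodory device ($|f_m(z_0)|\le 2(1-|c_0(f)|)$ and the geometric series at radius $r/3$), which costs only a harmless factor $1/3$ in the radius; you would need this (or an equivalent) to pass from per-degree estimates to the actual Dirichlet--Bohr inequality. Your treatment of $m\ge 3$ (exponent $-(m-2)^2/(8m)$, crude bounds on $\pi_m(x)$, summability up to $m\le\log_2 x$) is fine.
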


\vspace{2mm}

\noindent The rest of this section is devoted to the proof of this result.

\subsection{Reduction I}
We start with a device which reduces the estimation of  Dirichlet-Bohr radii $L(x)$ to the estimation of their {\it homogeneous
parts $L_{m}(x)$} which we are going to define now. For $x \geq 2$ define the finite dimensional Banach space
\begin{gather*}
\mathcal{H}^{(x)}_\infty: = \Big\{ D=\sum_{n=1}^\infty a_n n^{-s}\,\, \Big| \,\, a_n \neq 0  \,\,\, \text{ only if  }
\,\,\, n \leq x  \Big\}
\\
\|D\|_\infty = \sup_{t \in \mathbb{R}} \Big|\sum_{n \leq x} a_n \frac{1}{n^{it}}  \Big|=
\sup_{\re s >0} \Big|\sum_{n \leq x} a_n \frac{1}{n^{s}}  \Big|
\end{gather*}
together with its closed subspace
\[
\mathcal{H}^{(x,m)}_\infty: =
\Big\{
\sum_{n=1}^{\infty} a_n n^{-s}\,\, \Big| \,\, a_n \neq 0  \,\,\, \text{ only if  }
\,\,\, n \leq x \text{ and } \Omega(n)=m
\Big\}\,.
\]
Then
\[
L(x) = \sup \Big\{ 0 \leq r\leq 1 \, \big| \, \forall  D \in \mathcal{H}^{(x)}_\infty:
\sum_{n \leq x} \vert a_{n} \vert r^{\Omega(n)} \leq\|D\|_\infty\,
 \Big\}\,,
\]
and therefore  for $m \in \mathbb{N}$ we define the $m$-homogeneous $x$-th Dirichlet-Bohr radius by
\begin{equation} \label{pentagono}
L_{m}(x)  : =  \sup \Big\{0 \leq r \leq 1 \,\, \Big| \,\,\forall  D \in \mathcal{H}^{(x,m)}_\infty
:\, \sum_{n \leq x}  |a_n|  \leq r^{-m}  \|D\|_\infty \Big\}\,.
\end{equation}
The following result is the announced \textit{reduction theorem}.

\begin{proposition} \label{reduction} With the previous notation, we have
\[
\frac{1}{3}  \inf_m  L_{m}(x) \,\,\leq\,\, L(x) \leq \,\, \inf_m  L_{m}(x) \,\,\, \text{ for all }\,\, x \geq 2 \,.
\]
\end{proposition}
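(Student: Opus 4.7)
The upper bound $L(x)\leq\inf_m L_m(x)$ will be immediate from the definitions: applying the defining inequality of $L(x)$ to any $m$-homogeneous $D_m\in\mathcal{H}^{(x,m)}_\infty$ gives $L(x)^m\sum_n|a_n|\leq\|D_m\|_\infty$ (since $\Omega(n)=m$ on the support of $D_m$), which is precisely the defining inequality of $L_m(x)$ at $r=L(x)$. Taking the infimum over $m$ yields the desired upper bound.

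For the lower bound, set $r_0:=\inf_m L_m(x)$. Given $D=\sum_m D_m\in\mathcal{H}^{(x)}_\infty$ written through its homogeneous decomposition, the plan is to group $\sum_n|a_n|(r_0/3)^{\Omega(n)}$ by homogeneity and apply the defining inequality of each $L_m(x)\geq r_0$ on the $m$-th slice, obtaining
\[
\sum_n|a_n|(r_0/3)^{\Omega(n)}=\sum_m 3^{-m}\Bigl(r_0^m\!\!\sum_{\Omega(n)=m}\!\!|a_n|\Bigr)\leq\sum_m 3^{-m}\|D_m\|_\infty.
\]
The whole lower bound thus reduces to the ``Bohr-type inequality for the homogeneous expansion'':
\[
\sum_m 3^{-m}\|D_m\|_\infty\leq\|D\|_\infty. \qquad(\star)
\]

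To establish $(\star)$ I would pass to the Bohr transform: writing $n=p^\alpha$, the Dirichlet polynomial $D$ corresponds to a polynomial $f$ on $\mathbb{D}^{\pi(x)}$ whose $m$-homogeneous part corresponds to $D_m$, and Kronecker's density theorem together with the maximum modulus principle gives $\|D\|_\infty=\|f\|_\infty$ and $\|D_m\|_\infty=\|f_m\|_\infty$. Normalizing $\|f\|_\infty=1$ and writing $c_0:=f(0)$, $(\star)$ becomes $\sum_m\|f_m\|_\infty 3^{-m}\leq 1$, which I plan to prove in Wiener's classical style: first establish the per-degree bound $\|f_m\|_\infty\leq 1-|c_0|^2$ for every $m\geq 1$, and then sum geometrically,
\[
\sum_{m\geq 0}\|f_m\|_\infty 3^{-m}\leq |c_0|+(1-|c_0|^2)\cdot\tfrac{1}{2}\leq 1,
\]
the last inequality being just $(|c_0|-1)^2\geq 0$.

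The main obstacle is the per-degree estimate $\|f_m\|_\infty\leq 1-|c_0|^2$, and this is the step where the factor $1/3$ in the statement is born. For each $z\in\mathbb{T}^{\pi(x)}$ I would consider the one-variable slice $g_z(\lambda):=f(\lambda z)$, which is bounded by $1$ on $\mathbb{D}$ with $g_z(0)=c_0$ and Taylor coefficients $c_m(g_z)=f_m(z)$; the classical Carath\'eodory--Schur coefficient inequality (an iterated form of Schwarz--Pick coming from the Schur algorithm) then gives $|c_m(g_z)|\leq 1-|c_0|^2$ for every $m\geq 1$, and taking $\sup_{z\in\mathbb{T}^{\pi(x)}}$ yields the required estimate on $\|f_m\|_\infty$. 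The subtle point is precisely that while the case $m=1$ is just Schwarz--Pick, the extension to $m\geq 2$ is not implied by a single Cauchy estimate and genuinely requires the Schur iteration. Once this Schur-type bound is in place, the Bohr transform identification and the geometric summation combine in a routine way to finish the proof.
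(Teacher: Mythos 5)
Your proposal is correct and takes essentially the same route as the paper: the upper bound straight from the definitions, and the lower bound by passing to the polydisc via the Bohr correspondence, estimating each homogeneous part $\|f_m\|_\infty$ through a one-variable slice $\lambda\mapsto f(\lambda z)$, and summing the resulting geometric series with ratio $1/3$. The only (harmless) difference is the per-degree lemma: you invoke the Wiener--Schur coefficient bound $|c_m|\le 1-|c_0|^2$, whereas the paper applies Carath\'eodory's inequality to $h=1-\theta g$ to get $|f_m(z_0)|\le 2\,(1-|c_0(f)|)$; both are classical and both produce exactly the constant $1/3$ after summation.
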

We start with a reformulation  in terms of holomorphic functions. Note that if $n=p^{\alpha}$ and $1 \leq n \leq x$ then clearly $\alpha$ has at most the first $\pi(x)$ coordinates different from zero; in other words $\alpha \in \mathbb{N}_{0}^{\pi(x)}$.
Then, by Bohr's fundamental lemma (see \cite{QuQu13}) we know that for every  finite Dirichlet polynomial $\sum_{n \leq x}  a_n n^{-s}$ we have
\begin{align} \label{bohr}
 \sup_{t\in \mathbb{R}}
  \Big|\sum_{n \leq x} a_n n^{-it} \Big|= \sup_{z \in \mathbb{D}^{\pi(x)}} \Big|
 \sum_{\substack{\alpha \in \mathbb{N}_0^{\pi(x)}\\ 1 \leq p^\alpha \leq x}} a_{p^\alpha} z^\alpha \Big|\,.
\end{align}
With this identity in mind we define the Banach space
\begin{align*}
H^{(x)}_\infty :=  \Big\{ f \in H_\infty(\mathbb{D}^{\pi(x)}) \,\,\Big| \,\, c_\alpha(f)\neq 0 \,\,\, \text{ only if }
\,\,\,   p^\alpha \le x
    \Big\}\,,
\end{align*}
(the norm clearly given by the right  side of  \eqref{bohr}) and its closed subspace
\begin{align*}
H^{(x,m)}_\infty :=  \Big\{ f \in H_\infty(\mathbb{D}^{\pi(x)}) \,\,\Big| \,\, c_\alpha(f)\neq 0 \,\,\, \text{ only if }
\,\,\,   p^\alpha \le x \text{ and } |\alpha|=m\,
    \Big\}\,.
\end{align*}
 Identifying  Dirichlet series  $\sum_{n \leq x} a_n n^{-s} $
 with functions $\sum_{\substack{\alpha \in \mathbb{N}_0^{\pi(x)}\\ 1 \leq p^\alpha \leq x}} a_{p^\alpha} z^\alpha$ we then obtain  the following isometric equalities
\[
\mathcal{H}^{(x)}_\infty = H^{(x)}_\infty
\,\,\, \,\,
\text{ and }
\,\,\, \,\,
\mathcal{H}^{(x,m)}_\infty = H^{(x,m)}_\infty\,,
\]
and this in turn shows  that
\begin{equation} \label{count1}
L(x) =
\sup \Big\{0 \le r \leq 1 \, \,\Big|\,\,
\, \forall f \in H^{(x)}_\infty :\,
\sum_{\substack{\alpha \in \mathbb{N}_0^{\pi(x)}\\ 1 \leq p^\alpha \leq x}}
\big| c_\alpha(f) \big| r^{|\alpha|} \leq  \|f\|_\infty \Big\}
\,,
\end{equation}
and
\begin{equation} \label{count2}
L_{m}(x)
 =  \sup
 \Big\{
 0 \leq r \leq 1 \,\, \Big| \,\,\forall  f \in H^{(x,m)}_\infty
:\,
\sum_{\substack{1 \leq p^\alpha \le x\\|\alpha|=m}}
\big| c_\alpha(f)
\big| \leq  \,r^{-m}\,\|f\|_\infty \Big\}\,.
\end{equation}

\begin{proof}[Proof of Proposition \ref{reduction}]
The proof of the upper estimate is obvious, and for the proof of the lower estimate we follow  \cite[Section~2]{DeGaMa03}.
Fix $f \in H^{(x)}_\infty$ with $\|f\|_\infty \leq 1$, and write for its $m$-homo\-geneous part
\[
f_m(\omega) = \sum_{\substack{1 \leq p^\alpha \le x\\|\alpha|=m}} c_\alpha(f) \omega^\alpha \,,\,\,\omega \in  \mathbb{D}^{\pi(x)}\,;
\]
obviously, $f_m \in H^{(x,m)}_\infty$ and using  Cauchy inequalities we see that $\|f_m\|_\infty \leq 1$ for all $m$. We fix now some $z_0 \in \mathbb{D}^{\pi(x)}$ and $\theta \in \mathbb{T}$ such that $|c_0(f)|= \theta c_0(f)$, and define
\begin{align*}
&
g: \mathbb{D} \rightarrow \mathbb{C}\,,\,\, g(\omega) := f(\omega z_0)=
\sum_{m=1}^\infty
f_m(z_0) \omega^m\,,
\\&
h: \mathbb{D} \rightarrow \mathbb{C}\,, \,\, h := 1 - \theta g\,,
\end{align*}
Since $\|g\|_\infty \leq 1$, we have that $\re  h \ge 0$ on $\mathbb{D}$, and by Caratheodory's theorem (for an elementary proof, see \cite[Lemma~1.1]{Ai05})  we have for all $m$
\begin{equation} \label{enzo}
\big|f_m(z_0)\big|
=
\frac{h^{(m)}(0)}{m!} \leq 2 \re h(0) = 2 (1 - |c_0(f)|)\,.
\end{equation}
We take now some $r < \inf_m  L_{m}(x)$. Then for all $z \in \mathbb{D}^{\pi(x)} $ and all $m$ we have by \eqref{count2} and \eqref{enzo}
\[
 \sum_{\substack{1 \leq p^\alpha \le x\\|\alpha|=m}} \big|c_\alpha(f) (\frac{r}{3}z)^\alpha  \big|
 \leq \frac{1}{3^m} \| f_m\|_\infty
 \leq
 \frac{1}{3^m}2 (1 - |c_0(f)|) \,,
\]
and hence for all $z \in \frac{r}{3}\mathbb{D}^{\pi(x)}$
\[
 \sum_{\substack{1 \leq p^\alpha \le x}} \big|c_\alpha(f) z^\alpha  \big|
 \leq |c_0(f)| + \sum_{m=1}^\infty \frac{1}{3^m}2 (1 - |c_0(f)|) =1\,.
\]
The conclusion now follows from \eqref{count1}\,.
\end{proof}

\subsection{The tool}
The following proposition is our main tool --  a reelaboration of a result due  to Balasubramanian, Calado, and Queff\'{e}lec \cite[Theorem~1.4]{BaCaQu06} (see also \cite[Theorem~4.2]{DeScSe14}).

\begin{proposition} \label{balasubramanian}
Let $m\geq 2$ and $\kappa>1$. There exists  $C(\kappa)>0$  such that for every $m$-homoge\-neous Dirichlet polynomial $D=\sum_{n \leq x} a_{n} n^{-s}$ in $\mathcal{H}^{(x,m)}_\infty$ we have
\[
\sum_{n \leq x}|a_n|\frac{(\log n)^{\frac{m-1}{2}}}{n^{\frac{m-1}{2m}}}\leq C(\kappa) m^{m-1}(2\kappa)^{m}\|D\|_\infty\, .
\]
\end{proposition}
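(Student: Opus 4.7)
The plan is to transfer to polynomials on the polydisc via Bohr's isometry $\mathcal{H}^{(x,m)}_\infty\cong H^{(x,m)}_\infty$, and then combine polarization, the multilinear Bohnenblust--Hille inequality in its subexponential form, and Hölder's inequality with the conjugate exponents $(2m/(m+1),\,2m/(m-1))$ that match the weight $(\log n)^{(m-1)/2}/n^{(m-1)/(2m)}$. Concretely, $D$ corresponds to the $m$-homogeneous polynomial $P(z)=\sum_{|\alpha|=m,\,p^\alpha\leq x}a_{p^\alpha}z^\alpha$ on $\mathbb{D}^{\pi(x)}$ with $\|P\|_\infty=\|D\|_\infty$, and its associated symmetric $m$-linear form $L$ satisfies $\|L\|_\infty\leq (m^m/m!)\,\|P\|_\infty$ by polarization, with $a_{p^\alpha}=\binom{m}{\alpha}\tilde c_{\mathbf{i}}$ for every ordered $m$-tuple $\mathbf{i}$ of prime indices whose multi-index is $\alpha$. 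Using the identity $\sum_{|\alpha|=m}\binom{m}{\alpha}f(p^\alpha)=\sum_{\mathbf{i}}f(p_{i_1}\cdots p_{i_m})$, the target sum rewrites as $\sum_{\mathbf{i}:\,\prod p_{i_j}\leq x}|\tilde c_{\mathbf{i}}|\,w_{n(\mathbf{i})}$ with $n(\mathbf{i}):=\prod_j p_{i_j}$, and I would apply Hölder with the conjugate exponents above.

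The coefficient factor would then be controlled by the multilinear Bohnenblust--Hille inequality with subexponential constant (\cite{BaPeSe13}): for each $\kappa>1$ there is $K(\kappa)>0$ such that
\[
\Bigl(\sum_{\mathbf{i}}|\tilde c_{\mathbf{i}}|^{2m/(m+1)}\Bigr)^{(m+1)/(2m)}\leq K(\kappa)\,\kappa^m\,\|L\|_\infty\leq K(\kappa)\,\kappa^m\,\frac{m^m}{m!}\,\|D\|_\infty.
\]
The weight factor becomes $W^{(m-1)/(2m)}$ with
\[
W=\sum_{\mathbf{i}:\,\prod p_{i_j}\leq x}\frac{\bigl(\sum_j\log p_{i_j}\bigr)^m}{\prod_j p_{i_j}};
\]
expanding the $m$-th power multinomially would split the tuple sum into products of single-prime sums $\sum_{p\leq y}(\log p)^\ell/p$ that one controls by Mertens' estimates.

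The hard part will be the estimation of $W^{(m-1)/(2m)}$: a direct application of Hölder together with Mertens leaves a spurious $(\log x)^{(m-1)/2}$ factor, and removing it will require carefully exploiting both the tuple constraint $\prod p_{i_j}\leq x$ and the exact matching of exponents. This is essentially the Sidon-type inequality of Balasubramanian--Calado--Queff\'elec (\cite[Theorem~1.4]{BaCaQu06}, see also \cite[Theorem~4.2]{DeScSe14}), whose proof rests on hypercontractive inequalities on the infinite polytorus. Once an $x$-free bound of shape $W^{(m-1)/(2m)}\ll C_2(\kappa)^{m}\,m^{(m-1)/2}$ is in hand, combining it with the coefficient bound and using Stirling $m^m/m!\leq e^m$ will absorb the accumulated growth $e^m\cdot m^{(m-1)/2}$ into the factor $m^{m-1}$, with the residual exponential terms in $\kappa$ merging into $(2\kappa)^m$ after relabelling, producing the claimed $C(\kappa)\,m^{m-1}(2\kappa)^m\,\|D\|_\infty$.
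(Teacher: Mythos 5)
Your reduction to polynomials, the polarization and the rewriting of the target as $\sum_{\mathbf{i}}|\tilde c_{\mathbf{i}}|\,w_{n(\mathbf{i})}$ over ordered $m$-tuples are all sound, and you correctly identify the $(\log x)^{(m-1)/2}$ obstruction. But the route you sketch for removing it cannot work: the hoped-for $x$-free bound $W^{\frac{m-1}{2m}}\ll C_2(\kappa)^m m^{\frac{m-1}{2}}$ is simply false. Already for $m=2$ a direct computation gives
\[
W=\sum_{pq\le x}\frac{(\log pq)^2}{pq}
\;\gg\;\sum_{p\le\sqrt{x}}\frac{(\log p)^2}{p}\sum_{q\le x/p}\frac1q
\;\gg\;(\log x)^2\log\log x,
\]
so $W^{1/4}\gg\sqrt{\log x}$ genuinely grows with $x$; the constraint $pq\le x$ does not help, since the weight $(\log n)^2/n$ is largest for \emph{small} $n=pq$, so imposing $pq\le x$ only prunes the tail, not the bulk. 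The issue is structural: a single global H\"older with exponents $\bigl(\tfrac{2m}{m+1},\tfrac{2m}{m-1}\bigr)$ separates the coefficients from the \emph{entire} weight, and the weight factor then retains an unremovable power of $\log x$. Appealing to Balasubramanian--Calado--Queff\'elec to finish is also circular, since the present proposition is precisely (a re-elaboration of) that theorem.

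The paper's proof evades this by a finer, hierarchical split. Having passed to the $j_1\le\cdots\le j_m$ representation, it first bounds $(\log n)^{\frac{m-1}{2}}/n^{\frac{m-1}{2m}}$ by
$(m\log p_{j_m})^{\frac{m-1}{2}}\big/\bigl(p_{j_m}^{\frac{m-1}{2m}}(p_{j_1}\cdots p_{j_{m-1}})^{\frac{m-1}{2m}}\bigr)$,
then for each \emph{fixed} largest index $j_m$ applies Cauchy--Schwarz only over $(j_1,\ldots,j_{m-1})$. The coefficient side, summed over $j_m$, becomes the mixed $\ell^1(\ell^2)$ norm that Lemma~\ref{Fred2} bounds by $C(\kappa)(2\kappa)^m\|P\|_\infty$ (so no polarization cost), while the weight side for fixed $j_m$ is
$\bigl(\sum_{j\le j_m}p_j^{-(m-1)/m}\bigr)^{\frac{m-1}{2}}\ll\bigl(m\,p_{j_m}^{1/m}/\log p_{j_m}\bigr)^{\frac{m-1}{2}}$ by the Prachar-type estimate. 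The point you are missing is the exact cancellation
\[
\frac{(m\log p_{j_m})^{\frac{m-1}{2}}}{p_{j_m}^{\frac{m-1}{2m}}}\cdot\Bigl(\frac{m\,p_{j_m}^{1/m}}{\log p_{j_m}}\Bigr)^{\frac{m-1}{2}}=m^{m-1},
\]
which eliminates all dependence on $p_{j_m}$ (hence on $x$). Your global H\"older discards exactly this cancellation, because it averages over $j_m$ on the coefficient side in a way ($\ell^{2m/(m+1)}$ over the full array rather than $\ell^1_{j_m}(\ell^2)$) that is decoupled from the $p_{j_m}$-dependence of the weight. To fix the argument you must replace the one-shot H\"older by the paper's fix-the-largest-index-and-Cauchy--Schwarz step and use the mixed-norm Lemma~\ref{Fred2} in place of the plain Bohnenblust--Hille inequality.
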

Our proof follows from a careful analysis of the original proof of \cite{BaCaQu06}, that allows us to obtain the constant $C(\kappa) m^{m-1}(2\kappa)^{m}$, smaller than the
original one. Since this fact is essential for our purpose, we for the sake of completeness prefer to add the proof. 
Every $m$-homogeneous polynomial in $n$ variables admits two possible representations:
\[
P(z) =  \sum_{\substack{\alpha \in \mathbb{N}^n \\ \vert \alpha \vert =m}} c_\alpha z^\alpha= \sum_{1 \leq j_{1} \leq \cdots \leq j_{m} \leq n} c_{j_{1} , \ldots , j_{m}} z_{j_1}\cdot \ldots \cdot z_{j_m}\,,\,\, \text{ for } z \in \mathbb{C}^n.
\]
We need  the  following lemma \cite[page 492]{DeFrOrOuSe11} (see also \cite[Lemma~4.3]{DeScSe14} or \cite[Lemma~2.6]{BaDeFrMaSe}).

\begin{lemma} \label{Fred2}
Let $n\geq 1$, $m\geq 1$ and  $\kappa>1$. Then there exists $C(\kappa)>0$ such that,
for every $m$-homogeneous polynomial on $\mathbb C^n$ we have
\[
\sum_{j_{m} =1}^{n} \Big(\sum_{1 \leq j_{1} \leq \cdots \leq j_{m}}\vert c_{j_{1} , \ldots , j_{m}} \vert^2\Big)^{\frac12}
\leq C(\kappa) \big( 2 \kappa \big)^m \sup\{|P(z)|: z\in \mathbb{D}^n\} \,.
\]
\end{lemma}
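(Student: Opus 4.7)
The plan is to reduce the coefficient estimate to a mixed-norm inequality on a suitably chosen $m$-linear form, and then combine an $\ell_1$-$\ell_\infty$ duality argument with Bayart's hypercontractive $L_1\to L_2$ inequality. Concretely, I would associate to $P$ the \emph{triangular} $m$-linear form
\[
T(z^{(1)},\ldots,z^{(m)}) := \sum_{j_1 \leq \cdots \leq j_m} c_{j_1,\ldots,j_m}\, z^{(1)}_{j_1}\cdots z^{(m)}_{j_m}\,,
\]
which restricts on the diagonal to $P(z)=T(z,\ldots,z)$. A polarization-type inequality, in the spirit of the classical bound $\|L\|\leq (m^m/m!)\|P\|_\infty$ for the symmetric polarization $L$, allows one to control $\|T\|_{(\mathbb{D}^n)^m}\leq C_1^m\|P\|_\infty$ with a constant that will be absorbed into the final exponential $(2\kappa)^m$.

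Next, for each $i_m$ introduce the $(m-1)$-linear slice $T_{i_m}(z^{(1)},\ldots,z^{(m-1)}):=T(z^{(1)},\ldots,z^{(m-1)},e_{i_m})$. Two ingredients then deliver the coefficient estimate. First, a pointwise $\ell_1$-$\ell_\infty$ duality gives
\[
\sum_{i_m=1}^{n} |T_{i_m}(w)| \;=\; \sup_{\xi \in \mathbb{T}^n}|T(w,\xi)| \;\leq\; \|T\|
\]
for every $w=(w^{(1)},\ldots,w^{(m-1)})\in (\mathbb{T}^n)^{m-1}$; integrating yields $\sum_{i_m}\|T_{i_m}\|_{L_1((\mathbb{T}^n)^{m-1})}\leq \|T\|$. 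Second, since each $T_{i_m}$ is an $(m-1)$-homogeneous polynomial on the product torus, Bayart's hypercontractive inequality gives $\|T_{i_m}\|_{L_2}\leq C(\kappa)\kappa^{m-1}\|T_{i_m}\|_{L_1}$ for any $\kappa>1$, and Parseval together with the triangular support of $T$ identifies $\|T_{i_m}\|_{L_2}^2$ precisely with the inner sum $\sum_{i_1 \leq \cdots \leq i_{m-1} \leq i_m}|c_{i_1,\ldots,i_m}|^2$ appearing in the statement.

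Chaining these three ingredients and summing in $i_m$ gives
\[
\sum_{i_m=1}^{n}\Bigl(\sum_{i_1 \leq \cdots \leq i_{m-1} \leq i_m}|c_{i_1,\ldots,i_m}|^2\Bigr)^{1/2} \;\leq\; C(\kappa)\kappa^{m-1}\|T\| \;\leq\; C(\kappa)(C_1\kappa)^{m}\|P\|_\infty\,,
\]
and after re-parametrizing the free constant $\kappa>1$ we recover the advertised $C(\kappa)(2\kappa)^{m}\|P\|_\infty$ bound. The main obstacle is the polarization step: establishing $\|T\|_{(\mathbb{D}^n)^m}\leq C_1^m\|P\|_\infty$ with $C_1$ close enough to $2$. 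A naive symmetric polarization $\|L\|\leq(m^m/m!)\|P\|_\infty$ loses a factor of order $e^m$, and the multinomial weights $\alpha!/m!$ that appear when translating between symmetric multilinear coefficients $a_{\mathbf i}$ and ordered polynomial coefficients $c_{\mathbf j}$ degenerate badly for ``spread-out'' multi-indices. The resolution is to either work directly with the triangular $T$ via a Cauchy-integral representation analogous to the symmetric polarization formula, or to trade the polarization loss against the Bayart hypercontractive gain (hidden in $C(\kappa)$ as $\kappa\to 1^+$), keeping the combined exponential factor below $(2\kappa)^m$.
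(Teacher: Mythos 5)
You should note first that the paper does not prove Lemma~\ref{Fred2} at all: it quotes it from \cite{DeFrOrOuSe11} (see also \cite{DeScSe14}, \cite{BaDeFrMaSe}), so your argument has to stand on its own, and it breaks precisely at the step you yourself flag as the main obstacle. There is no polarization inequality $\|T\|_{(\mathbb{D}^n)^m}\le C_1^m\|P\|_\infty$ for the \emph{triangular} form: passing from $P$ to $T$ is the Kwapie\'n--Pe\l czy\'nski main triangle projection, which is logarithmically unbounded on bilinear forms on $\ell_\infty^n\times\ell_\infty^n$, so no constant depending only on $m$ can work. Concretely, for $m=2$ take $2n$ variables $x_1,\dots,x_{2n}$, write $u_j=x_{2j-1}$, $v_k=x_{2k}$, and let $P(x)=\sum_{j\ne k}\frac{u_jv_k}{j-k}$. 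Since the numbers $\frac1{j-k}$ are the Fourier coefficients of the bounded sawtooth function $\theta\mapsto i(\pi-\theta)$, Cauchy--Schwarz and Parseval give $|P(x)|\le\pi\big(\sum_j|u_j|^2\big)^{1/2}\big(\sum_k|v_k|^2\big)^{1/2}\le\pi n$ on $\mathbb{D}^{2n}$; but evaluating $T(x,y)=\sum_{a\le b}c_{ab}x_ay_b$ at $x$ the indicator of the odd coordinates and $y$ of the even ones leaves exactly the terms with $j<k$, so $|T(x,y)|=\sum_{l=1}^{n-1}\frac{n-l}{l}\ge n\log n-n$. Thus $\sup_{\mathbb{D}^{2n}\times\mathbb{D}^{2n}}|T|$ exceeds a multiple of $(\log n)\,\|P\|_\infty$, and neither of your rescue options can help: the loss is dimension-dependent, so it cannot be traded against any gain that is merely exponential in $m$, and no Cauchy-integral (polarization) formula can produce the non-symmetric $T$ from $P$, since the obstruction is genuine. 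A secondary overstatement: the $L_1$--$L_2$ Khinchine--Steinhaus comparison cannot hold with constant $C(\kappa)\kappa^{m-1}$ for \emph{every} $\kappa>1$ (products of independent factors $z_{2k-1}+z_{2k}$ force the base constant to be at least $\pi/(2\sqrt2)>1$); Bayart's theorem gives $(\sqrt2\,)^{m-1}$, which would be perfectly adequate, but only if the previous step were available.

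Your two sound ingredients (the Parseval identification of the inner $\ell_2$-sums and $\sum_{i}\|T_i\|_{L_1}\le\sup_{w,\xi}|T(w,\xi)|$) can be kept; what must change is the object whose supremum you take, and this is how the quoted sources proceed: they stay at the level of the polynomial itself. Since the coefficient of $z^\beta$ in $\partial_iP$ is $(\beta_i+1)c_{\beta+e_i}$, the inner sum over $j_1\le\cdots\le j_{m-1}\le i$ is dominated by $\|\partial_iP\|_{L_2(\mathbb{T}^n)}^2$, and hence, using Bayart's inequality and the pointwise identity $\sum_i|\partial_iP(w)|=\sup_{\xi\in\mathbb{T}^n}\big|\sum_i\xi_i\partial_iP(w)\big|$,
\[
\sum_{i=1}^n\Big(\sum_{j_1\le\cdots\le j_{m-1}\le i}|c_{j_1,\ldots,j_{m-1},i}|^2\Big)^{1/2}
\le\sum_{i=1}^n\|\partial_iP\|_{L_2(\mathbb{T}^n)}
\le(\sqrt2\,)^{m-1}\int_{\mathbb{T}^n}\sup_{\xi\in\mathbb{T}^n}\Big|\sum_{i=1}^n\xi_i\,\partial_iP(w)\Big|\,dw\,.
\]
The integrand is a \emph{one-slot} polarization of the symmetric form, and that is cheap: $\sum_i\xi_i\partial_iP(w)=\frac1{2\pi i}\oint_{|t|=r}P(w+t\xi)\,t^{-2}\,dt$, so by $m$-homogeneity it is bounded by $r^{-1}(1+r)^m\|P\|_\infty\le e\,m\,\|P\|_\infty$ at $r=\frac1{m-1}$. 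This yields the lemma with constant $e\,m\,(\sqrt2\,)^{m-1}\le C(\kappa)(2\kappa)^m$. In short, the quantity that can be controlled with a constant exponential in $m$ is $\sup_{w,\xi}\big|\sum_i\xi_i\partial_iP(w)\big|$, not $\sup_{w,\xi}|T(w,\xi)|$, and your route founders exactly on the difference between the two.
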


\begin{proof}[Proof of Proposition \ref{balasubramanian} ]
We begin by fixing some finite Dirichlet polynomial  $$D=\sum_{n \leq x} a_{n} n^{-s}\in  \mathcal{H}^{(x,m)}_\infty .$$ Now we define the following $m$-homogeneous polynomial  in $\pi(x)$ variables
\[
P(z) = \sum_{1 \leq j_{1} \leq \cdots \leq j_{m}\le\pi(x)} c_{\mathbf{i}} z_{j_1}\cdot \ldots \cdot z_{j_m}\,,\,\, z \in \mathbb{C}^{\pi(x)}\,,
\]
where   $c_{j_{1} \ldots j_{m}} = a_{n}$ for $1 \leq n=p_{j_{1}} \cdots p_{j_{m}} \leq x$ and $0$ otherwise.  Then
\begin{multline*}
 \sum_{n \leq x} \vert a_{n} \vert \frac{(\log n)^{\frac{m-1}{2}}}{n^{\frac{m-1}{2m}}}
= \sum_{1 \leq j_{1} \leq \cdots \leq j_{m} \leq \pi(x)} \vert c_{j_{1} , \ldots , j_{m}} \vert \frac{\big(\log (p_{j_{1}} \cdots p_{j_{m}}) \big)^{\frac{m-1}{2}}}{(p_{j_{1}} \cdots p_{j_{m}})^{\frac{m-1}{2m}}} \\
\leq \sum_{j_{m}=1}^{\pi(x)} \frac{(m \log p_{j_{m}})^{\frac{m-1}{2}}}{p_{j_{m}}^{\frac{m-1}{2m}}} \sum_{1 \leq j_{1} \leq \ldots \leq j_{m-1} \leq j_{m}} \frac{\vert c_{j_{1} , \ldots , j_{m}} \vert }{(p_{j_{1}} \cdots p_{j_{m-1}})^{\frac{m-1}{2m}}}\\
\leq \sum_{j_{m}=1}^{\pi(x)} \frac{(m \log p_{j_{m}})^{\frac{m-1}{2}}}{p_{j_{m}}^{\frac{m-1}{2m}}}
\bigg(  \sum_{1 \leq j_{1} \leq \ldots \leq j_{m-1} \leq j_{m}} \vert c_{j_{1} , \ldots , j_{m}} \vert^{2}\bigg)^{\frac{1}{2}} \times \\
\times \bigg(  \sum_{1 \leq j_{1} \leq \ldots \leq j_{m-1} \leq j_{m}} \frac{1}{(p_{j_{1}} \cdots p_{j_{m-1}})^{\frac{m-1}{m}}}\bigg)^{\frac{1}{2}} \, ,
\end{multline*}
where the last step follows from the Cauchy-Schwarz inequality. We use now the fact that for $0<\alpha < 1$ (see \cite[Satz~4.2, p. 22]{Pr57})
\[
    \sum_{p \leq x} p^{-\alpha}\, \ll \frac{1}{1-\alpha}\, \frac {x^{1-\alpha}}{\log x}
\]
to bound the last factor. By taking $\alpha=\frac{m-1}{m}$,
\[
 \bigg(  \sum_{1 \leq j_{1} \leq \ldots \leq j_{m-1} \leq j_{m}} \frac{1}{(p_{j_{1}} \cdots p_{j_{m-1}})^{\frac{m-1}{m}}}\bigg)^{\frac{1}{2}}
\leq \bigg(  \sum_{j \leq j_{m}} \Big( \frac{1}{p_{j}} \Big)^{\frac{m-1}{m}} \bigg)^{\frac{m-1}{2}}
\ll \Big( m \frac{p_{j_{m}}^{\frac{1}{m}}}{\log p_{j_{m}}}\Big)^{\frac{m-1}{2}} \, .
\]
With this we have
\begin{align*}
 \sum_{n \leq x} \vert a_{n} \vert \frac{(\log n)^{\frac{m-1}{2}}}{n^{\frac{m-1}{2m}}}
\ll & m^{m-1}  \sum_{j_{m}=1}^{\pi(x)} \frac{(\log p_{j_{m}})^{\frac{m-1}{2}}}{p_{j_{m}}^{\frac{m-1}{2m}}}
\Big( \frac{p_{j_{m}}^{\frac{1}{m}}}{\log p_{j_{m}}}\Big)^{\frac{m-1}{2}} \bigg(  \sum_{1 \leq j_{1} \leq \ldots \leq j_{m-1} \leq j_{m}} \vert c_{\mathbf{j}} \vert^{2}\bigg)^{\frac{1}{2}}  \,.
\end{align*}
Finally, by  Lemma~\ref{Fred2} and  \eqref{bohr},  there exists  $C(\kappa)>0$ such that
\[
 \sum_{n \leq x} \vert a_{n} \vert \frac{(\log n)^{\frac{m-1}{2}}}{n^{\frac{m-1}{2m}}} \leq C(\kappa) m^{m-1} (2\kappa)^{m} \|P\| = C(\kappa) m^{m-1} (2\kappa)^{m} \|D\|_\infty\, .
\]
\end{proof}

\subsection{Proofs}

 \begin{proof}[Proof of the lower estimate in Theorem~\ref{main}] We fix some $x \geq 2$. By Proposition~\ref{reduction}
 we only have to control each $m$-homogeneous part, $L_{m}(x)$. Note first that if $1 \leq n \leq x$ is such that
$\Omega (n)=m$ we have that $2^{m} \leq n \leq x$, which gives $m\le \frac{\log x}{\log 2}$. Then $\mathcal{H}^{x,m}_\infty = \{ 0\} $, and hence  $L_{m}(x)=1$, for every  $m>\frac{\log x}{\log 2}$. Thus
 \begin{equation}\label{BohrRadiuslambdax}
\frac{1}{3}\,\,\,\min_{1\leq m\leq \frac{\log x}{\log 2}} L_{m}(x)\,\, \le \,\,L_{x}.
\end{equation}
By \eqref{o'flynn} we have $L_{1}(x)=1$ for every $x$. We fix then $m \geq 2$ and observe that,  for every $D=\sum_{n \leq x} a_{n} n^{-s}  \in  \mathcal{H}^{(x,m)}_\infty$ we have $a_1=a_2=a_3=a_5=a_7=0$. By
Proposition~\ref{balasubramanian}, for each $\kappa>1$ there exists  $C(\kappa)>0$ such that
\[
\sum_{n \leq x}|a_n|\leq C(\kappa) m^{m-1}(2\kappa)^{m}x^{\frac{m-1}{2m}}\|D\|_\infty\,.
\]
This, using \eqref{pentagono}, gives
\[
 m^{-1} x^{-\frac{m-1}{2m^2}} \ll \Big( C(\kappa) m^{m-1}(2\kappa)^{m}x^{\frac{m-1}{2m}}\Big)^{-1/m} \leq  L_{m}(x)\,.
\]
But the sequence  $\big(x^{-\frac{m-1}{2m^2}}\big)_{m=2}^\infty$ is increasing to 1 (recall that $x \geq 2$). This  implies that  for all $m \geq 3$
\[
 m^{-1} x^{-\frac{1}{9}} \ll   L_{m}(x)\,\,,
 \]
 and hence for all $3\leq m\leq \frac{\log x}{\log 2}$
\begin{equation} \label{7}
\frac{\sqrt[4]{\log x}}{x^{\frac{1}{8}}} \ll  {\frac{\log 2}{\log x}} \frac{1}{x^{\frac{1}{9}}} \ll L_{m}(x)\,.
\end{equation}
We finish our argument by handling the case $m=2$. We observe first that $f(t)= \frac{\sqrt{\log t}}{t^\frac{1}{4}}=e^{g(t)}$ with $g(t)=\frac{1}{2}\log \log t -\frac{1}{4}\log t$, $t\geq  2$. Since  $g'(t)=\frac{1}{2t}\frac{2-\log t}{2 \log t}$, we have
 that $f$ is strictly decreasing for $t>e^2$. Then the sequence $\big(\frac{\sqrt{\log n}}{n^{\frac{1}{4}}}\big)$  is strictly decreasing for $n\geq 8$. Thus there exists $A>0$ such that for every $2\leq n\leq x$ we have
$ \frac{\sqrt{\log x}}{x^\frac{1}{4}}\leq A \frac{\sqrt{\log n}}{n^{\frac{1}{4}}}\,.$
Applying again Proposition~\ref{balasubramanian} we see that for every $D \in  \mathcal{H}^{(x,2)}_\infty$
\[
\frac{\sqrt{\log x}}{x^\frac{1}{4}} \sum_{n \leq x}|a_n|\leq A \, C(\kappa) 8\kappa^2 \|D\|_\infty\,,
\]
and hence
\[
\frac{\sqrt[4]{\log x}}{x^{\frac{1}{8}}} \ll    L_{2}(x)\,.
\]
This equation combined with \eqref{7} and \eqref{BohrRadiuslambdax} proves the lower estimate.
\end{proof}

\begin{proof}[Proof of the upper estimate in Theorem~\ref{main}] 

By Proposition~\ref{reduction}  it  suffices to show that there is a constant $C>0$  such that for all $x$
\begin{equation} \label{desired}
L_{2}(x)  \le C \frac{\sqrt[4]{\log x}}{x^{\frac{1}{8}}}\,.
\end{equation}
According to \eqref{pentagono}, fix some  $x$ and assume that $r>0$ satisfies
\begin{equation} \label{assumption}
\sum_{n \leq x} |a_n| \leq r^{-2} \sup_{t  \in \mathbb{R}} \Big\vert \sum_{n \leq x} a_{n} n^{it} \Big\vert \,.
\end{equation}
for every Dirichlet polynomial
$\sum_{n \leq x} a_n n^{-s} \in \mathcal{H}^{(x,2)}_\infty$
We choose $q$ to be the biggest natural number  $\leq \frac{\pi (\sqrt{x})}{2}$. We take a $q \times q$ matrix $(a_{nk})_{n,k}$  for which  $\vert a_{nk} \vert = 1$ and
$\sum_{l} a_{ln} \overline{a}_{lk} = q \delta_{nk}$. We define the Dirichlet series
\[
\sum_{n,k=1}^{q}  a_{nk} \frac{1}{(p_{n}p_{q+k})^s}  \in \mathcal{H}^{(x,2)}_\infty\,.
\]
Note that for every $1 \leq n,k\leq q$ we have $p_{n}p_{q+k} \leq p_{2q}^{2}   \le p_{\pi(\sqrt{x})}^2 \leq x$ and the Dirichlet series indeed belongs to $ \mathcal{H}^{(x,2)}_\infty$. Obviously, we have
\[
\sum_{n,k=1}^{q}  \big| a_{nk}  \big| = q^2\,.
\]
On the other hand,
\begin{multline*}
\sup_{t \in \mathbb{R}}\Big| \sum_{n,k=1}^{q}   a_{nk} p_{n}^{it}  p_{q+k}^{it} \Big|
\leq  q^{1/2} \Big(\sum_{k}  \Big|\sum_{n}  a_{nk} p_{n}^{it} \Big|^2\Big)^{1/2} \\
=  q^{1/2} \Big(  \sum_{k}  \sum_{n_1,n_2}   a_{k n_{1}}  \overline{a_{k n_{2}}} p_{n_{1}}^{it}  p_{n_{2}}^{-it}    \Big)^{1/2}
=  q^{1/2} \Big( \sum_{n_1,n_2} p_{n_{1}}^{it}  p_{n_{2}}^{-it}   \sum_{k}     a_{k n_{1}}  \overline{a_{k n_{2}}} \Big)^{1/2}    \\
= q^{1/2} \Big( \sum_{n_1,n_2} p_{n_{1}}^{it}  p_{n_{2}}^{-it}  q \delta_{n_1,n_2} \Big)^{1/2}
= q \Big( \sum_{n} |p_{n}^{it}|^2  \Big)^{1/2}  \leq  q^{3/2}\,.
\end{multline*}
Then by \eqref{assumption} we conclude $q^{2} \leq r^{-2} q^{\frac{3}{2}}$.
But from the  prime number theorem we deduce that there is a (universal) constant $C>0$ such  that $\frac{\sqrt{x}}{\log x} \le C q \,,$
and therefore
\[
r \leq C   \frac{\sqrt[4]{\log x}}{x^{\frac{1}{8}}}\,.
\]
Clearly, this gives the desired estimate \eqref{desired}.
\end{proof}

\section{Dirichlet-Bohr radii} \label{generales}
The main goal of  the previous section was to find the correct asymptotic order of  the Dirichlet-Bohr radius $L\big( \big\{n \in \mathbb{N}\,|\, 1 \leq n \leq x  \big\} \big)$.

Analysing  the  ideas of our  proof, we in the coming subsection show how to reduce the study of  Dirichlet-Bohr radii $L(J)$ for  index sets
to the study of Bohr radii for holomorphic functions in infinitely many variables with lacunary monomial coefficients. Finally, we  treat a series of old and new  examples.

\subsection{Reduction II}
Let  $\Lambda$ be a subset of  $\mathbb{N}_0^{(\mathbb{N})}$. Consider the Banach space
\begin{align*}
H^{\Lambda}_\infty(B_{c_0}) :=  \Big\{ f \in H_\infty(B_{c_0}) \,\,\Big| \,\, c_\alpha(f)\neq 0 \,\,\, \text{ only if }
\,\,\,   \alpha \in \Lambda
    \Big\}\,,
\end{align*}
where as usual $H_\infty(B_{c_0}) $ denotes the Banach space of all bounded holomorphic (= Fr\'echet differentiable) functions
on the open unit ball $B_{c_0}$ of the Banach space of all null sequences $c_0$.\\

Now, the \textit{Bohr radius} $K(\Lambda)$ is defined to be the  best $ r=r(\Lambda)\ge 0 $ such that for every $ f \in H^\Lambda_\infty(B_{c_0}) $ we have
\[
\sum_{\alpha\in \Lambda}|c_\alpha(f)| r^{|\alpha|}\le \|f\|_\infty\,.
\]

Note that, with this notation, the classical Bohr radius $K_{n}$ is just $K(\mathbb{N}_{0}^{n})$.

The following  result extends \eqref{count1} to arbitrary index sets. Let us note that the proof of  \eqref{count1} was based on Bohr's fundamental lemma  \eqref{bohr}. We need, then, an extension of this. Inspired by an idea of Bohr and based
on the fundamental theorem of arithmetics we here consider the following bijection:
\[
\mathfrak{b} : \,\,\mathbb{N}_0^{(\mathbb{N})} \rightarrow  \mathbb{N}\,, \,\,\, \mathfrak{b}(\alpha)=p^\alpha \,.
\]
We denote now by $\mathcal{H}_\infty$ all Dirichlet series $\sum_n a_n n^{-s}$ defining a bounded holomorphic function on $[\re  s > 0]$; this vector space together with the sup norm  on $[\re  s > 0]$ forms a Banach space.
By \cite[Lemma~2.3 and Theorem~3.1]{HeLiSe97} (a fact also essentially due to Bohr \cite{Bo13_Goett}) there is  a unique
isometric and linear bijection $\Phi$ from $H_\infty(B_{c_0})$ onto $\mathcal{H}_\infty$ such $\Phi(z^\alpha) =  n^{-s}$ with $\mathfrak{b}(\alpha)=n$:
\[
H_\infty(B_{c_0}) \,\,=\,\,\mathcal{H}_\infty\,.
\]
Using this general principle a simple translation argument from Dirichlet series into holomorphic functions, and vice versa gives the following result.

\begin{proposition} \label{BDB}
For each set
$J \subset \mathbb{N}$ and  $\Lambda \subset \mathbb{N}_0^{(\mathbb{N})}$  with $J=\mathfrak{b}(\Lambda)$
\[
K(\Lambda) = L(J)\,.
\]
\end{proposition}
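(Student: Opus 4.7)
The plan is to deduce Proposition~\ref{BDB} as a direct consequence of the Bohr isometry $\Phi\colon H_\infty(B_{c_0}) \to \mathcal{H}_\infty$ recalled just before the statement, combined with the bookkeeping fact that the bijection $\mathfrak{b}$ sends the ``degree'' $|\alpha|$ to the prime-divisor count $\Omega(n)$.

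First, I would observe that the restriction of $\mathfrak{b}$ to $\Lambda$ is a bijection onto $J$, and that $|\alpha|=\alpha_1+\cdots+\alpha_k$ equals $\Omega(p^\alpha)$ by the very definition of $\Omega$ (counting prime divisors with multiplicity). Next, I would apply $\Phi$ monomial by monomial: if $f = \sum_\alpha c_\alpha(f) z^\alpha \in H_\infty(B_{c_0})$, then $\Phi(f) = \sum_n a_n n^{-s} \in \mathcal{H}_\infty$ with $a_{\mathfrak{b}(\alpha)} = c_\alpha(f)$. Because $\Phi$ is an isometric linear bijection, this identification shows that $f \in H^\Lambda_\infty(B_{c_0})$ if and only if the Dirichlet series $\Phi(f)$ is supported in $J=\mathfrak{b}(\Lambda)$, and moreover $\|f\|_\infty = \|\Phi(f)\|_\infty$.

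With these correspondences in place, the desired equality is essentially a rewriting. For any $r\ge 0$, the coefficient sum that defines $K(\Lambda)$ transforms term by term:
\[
\sum_{\alpha\in\Lambda}|c_\alpha(f)|\,r^{|\alpha|} \;=\; \sum_{n\in J}|a_n|\,r^{\Omega(n)}\,,
\]
so the defining inequality for the Bohr radius $K(\Lambda)$ holds for every $f \in H^\Lambda_\infty(B_{c_0})$ if and only if the defining inequality for the Dirichlet-Bohr radius $L(J)$ holds for every Dirichlet series in $\mathcal{H}_\infty$ supported on $J$. Taking the supremum over admissible $r$ on both sides gives $K(\Lambda)=L(J)$.

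The only mildly delicate point is to make sure the Dirichlet series in $\mathcal{H}_\infty$ supported on $J$ are exactly those considered in the definition~\eqref{ladefinition} of $L(J)$: a priori the latter is stated for series converging on $[\operatorname{Re} s>0]$, not necessarily belonging to $\mathcal{H}_\infty$. I would handle this by noting that whenever $\sum_{n\in J}|a_n|r^{\Omega(n)}<\infty$ the partial sums form an increasing family dominated by the sup over the half-plane, so a standard truncation / monotone convergence argument reduces the inequality to finite Dirichlet polynomials, which are automatically in $\mathcal{H}_\infty$; and on the holomorphic side the same truncation argument reduces $K(\Lambda)$ to polynomials. Once this routine reduction is made, the isometry $\Phi$ takes care of everything and the proposition follows.
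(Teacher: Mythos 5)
Your main line is precisely the paper's own argument: Proposition~\ref{BDB} is obtained there by the ``simple translation argument'' through the isometry $\Phi\colon H_\infty(B_{c_0})\to\mathcal{H}_\infty$ with $\Phi(z^\alpha)=n^{-s}$, $n=\mathfrak{b}(\alpha)$, so that monomial coefficients match Dirichlet coefficients, $|\alpha|=\Omega(p^\alpha)$, supports in $\Lambda$ correspond to supports in $J$, norms are preserved, and the defining inequalities for $K(\Lambda)$ and $L(J)$ transform into one another term by term. So far this is exactly what the paper does.

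The one step that would fail as written is your treatment of the ``mildly delicate point''. The claim that the partial sums are ``dominated by the sup over the half-plane'' is false in general: for $D\in\mathcal{H}_\infty$ the truncations $\sum_{n\le x}a_nn^{-s}$ need not satisfy $\|\cdot\|_\infty\le\|D\|_\infty$ (the partial-sum operator on $\mathcal{H}_\infty$ has norm of order $\log x$), so monotone convergence alone does not reduce the inequality in \eqref{ladefinition} to finite Dirichlet polynomials, and the analogous truncation of the monomial expansion on the holomorphic side is likewise not norm-decreasing. In fact no polynomial reduction is needed: by Bohr's theorem the abscissa of uniform convergence equals the abscissa of boundedness, so every $D\in\mathcal{H}_\infty$ converges on $[\re s>0]$, while conversely a series convergent there with finite supremum belongs to $\mathcal{H}_\infty$; since series with infinite supremum impose no constraint in \eqref{ladefinition}, the class tested in the definition of $L(J)$ is exactly $\Phi\bigl(H^\Lambda_\infty(B_{c_0})\bigr)$, and the isometry finishes the proof. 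If you do want to pass through polynomials, first translate: apply the polynomial inequality to the partial sums of $D(\cdot+\sigma)$, $\sigma>0$, which converge uniformly on $[\re s\ge 0]$ because $\sigma_u(D)\le 0$, and then let $\sigma\downarrow 0$ by monotone convergence.
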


\noindent Our next device reduces the estimation of Dirichlet-Bohr radii of a given  index set $J$ to the estimation of  Dirichlet-Bohr radii of certain parts of $J$.
Given  $J \subseteq \mathbb{N}$ and $n,m \in \mathbb{N}$,  the  \textit{$n$-dimensional kernel of $J$} is defined to be
\[
J(n) = \big\{ k \in J \,\, \big |  \,\, \forall j > n : p_j \nmid k \big\}\,,
\]
and its  \textit{$m$-homogeneous kernel}
\[
J[m] = \big\{ k \in J \,\, \big |  \,\, \Omega(k)=m  \big\}\,.
\]
Note that when $J=\mathbb{N}$, then the $n$-dimensional kernel consists of all the natural numbers that factor through the first $n$ primes   and the $m$-homogeneous kernel consists of those which have precisely $m$ prime divisors (counted with multiplicities). In other words
\begin{align*}
\mathbb{N}(n) = \{p_{1}^{\alpha_{1}}\cdots p_{n}^{\alpha_{n}} \big| \alpha \in \mathbb{N}_{0}^{n}  \}  &&
\text{and} &&
\mathbb{N}[m] = \{p_{1}^{\alpha_{1}}\cdots p_{k}^{\alpha_{k}} \cdots  \big| \alpha_{1} + \cdots + \alpha_{k} + \cdots =m \}  \, .
\end{align*}
Then, clearly $J(n)=J\cap \mathbb{N}(n)$ and $J[m] = J \cap \mathbb{N}[m]$. We also have
\begin{align*}
\mathfrak{b}^{-1}(J(n))  = \big\{  \alpha \in \mathbb{N}_0^n \,\big|\, p^\alpha \in J\big\}
&& \text{and} &&
\mathfrak{b}^{-1}(J[m])  = \big\{  \alpha \in \mathbb{N}_0^{(\mathbb{N})}\,\,\big|\, p^\alpha \in J  \text{ with } \,\, |\alpha|=m \big\}\,.
\end{align*}
In particular, $\mathfrak{b}^{-1}(\mathbb{N}(n))  = \mathbb{N}_0^{n}$ and $\mathfrak{b}^{-1}(\mathbb{N}[m])  = \big\{  \alpha \in \mathbb{N}_0^{(\mathbb{N})}\,\,\big|\, \, |\alpha|=m \big\}$\,.
Let us finally observe that
\[
\mathbb{N}(n)[m] = \{p_{1}^{\alpha_{1}}\cdots p_{n}^{\alpha_{n}} \big| \alpha \in \mathbb{N}_{0}^{n} \text{ and } \alpha_{1} + \cdots + \alpha_{n} =m   \} = \mathbb{N}[m] (n)
\]
and from this $J(n)[m] = J \cap \mathbb{N}(n)[m]  = J \cap \mathbb{N}[m] (n) = J[m](n)$ for every $J \subseteq \mathbb{N}$ and every $n,m$. We can now give our announced reduction device.
\begin{proposition} \label{reduction2}
Let $J$ be a subset of \,$\mathbb{N}$. Then
\vspace{2mm}
\begin{enumerate}
\item $L(J)\,\, = \,\,\inf_{n}  L(J(n))$  \label{duke}\\
\item $\frac{1}{3} \inf_{m}  L(J[m]) \,\leq \,\, L(J) \,\, \leq \,\,\inf_{m}  L(J[m]) $ \label{count}
\end{enumerate}
\end{proposition}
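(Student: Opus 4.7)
My plan is to translate both parts of the proposition to the holomorphic side via Proposition~\ref{BDB}, since all the real work has already been done on that side in the proof of Proposition~\ref{reduction}. Writing $\Lambda := \mathfrak{b}^{-1}(J) \subset \mathbb{N}_0^{(\mathbb{N})}$ and using the identifications $\mathfrak{b}^{-1}(J(n)) = \Lambda \cap \mathbb{N}_0^n$ and $\mathfrak{b}^{-1}(J[m]) = \{\alpha \in \Lambda : |\alpha|=m\} =: \Lambda[m]$ recorded just before the proposition, the two statements become
\[
K(\Lambda) \,=\, \inf_n K(\Lambda \cap \mathbb{N}_0^n)
\qquad \text{and} \qquad
\tfrac{1}{3} \inf_m K(\Lambda[m]) \,\leq\, K(\Lambda) \,\leq\, \inf_m K(\Lambda[m]).
\]
The tools I expect to use are nothing beyond subspace comparison, truncation of variables, Cauchy inequalities, and Caratheodory on $\mathbb{D}$, all carried out on $B_{c_0}$ rather than on a finite polydisc.

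For part (i), the upper estimate $K(\Lambda) \leq K(\Lambda \cap \mathbb{N}_0^n)$ is immediate from the inclusion $H^{\Lambda \cap \mathbb{N}_0^n}_\infty(B_{c_0}) \subset H^\Lambda_\infty(B_{c_0})$: any admissible radius for the larger class is admissible for the smaller. For the lower estimate I would fix $r < \inf_n K(\Lambda \cap \mathbb{N}_0^n)$ and $f \in H^\Lambda_\infty(B_{c_0})$, and form the truncation $f_n(z) := f(z_1,\dots,z_n,0,0,\dots)$; this lies in $H^{\Lambda \cap \mathbb{N}_0^n}_\infty(B_{c_0})$ with $\|f_n\|_\infty \leq \|f\|_\infty$ and $c_\alpha(f_n) = c_\alpha(f)$ for every $\alpha \in \mathbb{N}_0^n$. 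Applying the defining inequality of $K(\Lambda \cap \mathbb{N}_0^n)$ to $f_n$ and letting $n \to \infty$, using that each $\alpha \in \mathbb{N}_0^{(\mathbb{N})}$ has finite support so $\Lambda = \bigcup_n (\Lambda \cap \mathbb{N}_0^n)$, monotone convergence yields $\sum_{\alpha \in \Lambda} |c_\alpha(f)| r^{|\alpha|} \leq \|f\|_\infty$.

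For part (ii), the upper estimate is again the trivial subspace direction, now via $H^{\Lambda[m]}_\infty(B_{c_0}) \subset H^\Lambda_\infty(B_{c_0})$. The lower estimate is the verbatim analogue of the proof of Proposition~\ref{reduction}, transplanted from $\mathbb{D}^{\pi(x)}$ to $B_{c_0}$: for $f \in H^\Lambda_\infty(B_{c_0})$ with $\|f\|_\infty \leq 1$, Cauchy's inequalities give $\|f_m\|_\infty \leq 1$ for each $m$-homogeneous part $f_m$, and Caratheodory's theorem applied to $h(\omega) := 1 - \theta f(\omega z_0)$ on $\mathbb{D}$ gives $|f_m(z_0)| \leq 2(1-|c_0(f)|)$ for every $z_0 \in B_{c_0}$, hence $\|f_m\|_\infty \leq 2(1-|c_0(f)|)$. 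For $r < \inf_m K(\Lambda[m])$ and $z \in B_{c_0}$, applying the definition of $K(\Lambda[m])$ to $f_m$ and rescaling $z$ by $r/3$ yields
\[
\sum_{\alpha \in \Lambda[m]} \bigl|c_\alpha(f)\bigl(\tfrac{r}{3}z\bigr)^\alpha\bigr| \;\leq\; \tfrac{1}{3^m}\|f_m\|_\infty \;\leq\; \tfrac{2}{3^m}\bigl(1-|c_0(f)|\bigr),
\]
and summing over $m \geq 1$ with a geometric series gives $\sum_\alpha |c_\alpha(f)(\tfrac{r}{3}z)^\alpha| \leq 1$, proving $K(\Lambda) \geq r/3$.

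The only mild point to keep in mind is that $K(\Lambda')$, as defined via $B_{c_0}$, coincides with the Bohr radius computed on $\mathbb{D}^n$ when $\Lambda' \subset \mathbb{N}_0^n$; this is harmless since any $f \in H^{\Lambda'}_\infty(B_{c_0})$ is independent of the coordinates past $n$, so its sup norm on $B_{c_0}$ equals that of its restriction to $\mathbb{D}^n$. With this identification in place, both parts of the proposition are routine, the only genuine ingredient being the Caratheodory/Cauchy argument already exploited in Proposition~\ref{reduction}.
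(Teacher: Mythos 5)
Your proof is correct and follows essentially the same route as the paper: the paper's own (two-line) proof obtains part (ii) by a word-for-word copy of the Caratheodory/Cauchy argument of Proposition~\ref{reduction} and part (i) ``easily'' after translating to holomorphic functions via Proposition~\ref{BDB}, which is exactly what you do, with the truncation $f_n(z)=f(z_1,\dots,z_n,0,\dots)$ being the intended easy argument. You merely spell out the details the paper leaves implicit, so no further comparison is needed.
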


\begin{proof}
The proof of the second statement follows from a word by word copy of the proof of Proposition
\ref{reduction}. The argument of the first statement is easy after a translation to holomorphic functions by
Proposition \ref{BDB}.
\end{proof}

Of course, (\ref{duke}) and (\ref{count}) can be combined showing the  infimum over $\big(  L(J[m](n) ) \big)_{m,n}$ and
$\big(  L(J(n)[m])\big)_{m,n}$, respectively,  up to the constant $1/3$ equals $L(J)$.

\vspace{3mm}

\subsection{Examples}
\noindent We first recover with this systematic language the fundamental examples \eqref{o'flynn} that were already mentioned in the introduction.

\begin{example} \label{mother} \text{ }
\begin{enumerate}
\item $L\big(\mathbb{N}[1]\big)=L\big(\big\{ p \,|\, p \, \text{ prime }  \big\}\big) = 1$ \label{mother i}\\
\item $L\big(\mathbb{N}(1)\big)= L\big(\big\{ 2^k \,|\, k \in \mathbb{N}      \big\}\big) = \frac{1}{3} $ \label{mother ii}
\end{enumerate}
\end{example}
\noindent We remark that (\ref{mother i}) here is nothing else than Bohr's inequality \eqref{Bohr's inequality}, whereas (\ref{mother ii})  is just a reformulation via Proposition~\ref{BDB} of Bohr's power series theorem  \eqref{Bohr's power series theorem}
 (see also \eqref{motherAAA}). Basically, these and the one in the following example are the only precise values of Dirichlet-Bohr radii we know.

\begin{example} \label{another interesting example}
$ \displaystyle L\Big(\big\{ p_\ell^k \,\big|\,k, \ell \in \mathbb{N}  \big\}\Big) = \frac{1}{3}$.
\end{example}

\noindent This turns out to be  an immediate consequence of the following more general result.
Given a subset $A$   of $\mathbb{N}$, we will denote its cardinal number by $|A|$.

\begin{proposition} \label{manolin}
Let $P_k\,,\, k \in \mathbb{N}$ be disjoint  sets of primes such that
\[
n=\max_k | P_k| < \infty\,.
\]
Define $J_{P_k}$ to be the set of all natural numbers which are finite products of primes in $P_k$, that is
\[
J_{P_k} =\big\{ p^\alpha \,| \, \alpha_{j} = 0,\, {\text if} \ p_j \notin  P_k \big\}\,.
\]
Then
\[
L\Big(\bigcup_k J_{P_k} \Big)  = L\big(\mathbb{N}(n)\big)\,.
\]
\end{proposition}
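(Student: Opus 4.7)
The plan is to prove the two inequalities separately. For the upper bound $L(\bigcup_k J_{P_k}) \leq L(\mathbb{N}(n))$, simply observe that, by hypothesis, some $P_{k^*}$ has $|P_{k^*}|=n$, so $J_{P_{k^*}} \subseteq \bigcup_k J_{P_k}$; any radius satisfying the Dirichlet--Bohr inequality on the larger index set satisfies it \emph{a fortiori} on the smaller, hence $L(\bigcup_k J_{P_k}) \leq L(J_{P_{k^*}})$. On the holomorphic side, Proposition~\ref{BDB} identifies $L(J_{P_{k^*}})$ with $K(\mathbb{N}_0^{Q_{k^*}}) = K(\mathbb{N}_0^n) = L(\mathbb{N}(n))$ after an innocuous relabeling of the $n$ primes of $P_{k^*}$.

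For the harder direction, I would invoke Proposition~\ref{BDB} to recast the claim as $K(\Lambda) \geq K(\mathbb{N}_0^n)$, where $\Lambda = \bigcup_k \mathbb{N}_0^{Q_k}$ and $Q_k \subset \mathbb{N}$ indexes the primes appearing in $P_k$. Given $f \in H^\Lambda_\infty(B_{c_0})$, expand into homogeneous parts $f = \sum_m f_m$; the block structure of $\Lambda$ forces each $f_m$ to split as $f_m = \sum_k (f_m)_k$, where $(f_m)_k$ is an $m$-homogeneous polynomial in the $|Q_k| \leq n$ variables of block $Q_k$ alone. The crucial ingredient is the norm identity
\[
\|f_m\|_{B_{c_0}} \,=\, \sum_k \|(f_m)_k\|_{\mathbb{D}^{|Q_k|}},
\]
which follows from $m$-homogeneity together with the disjointness of the $Q_k$'s: the block-wise rotation $z|_{Q_k} \mapsto e^{i\theta_k} z|_{Q_k}$ scales $(f_m)_k$ by $e^{im\theta_k}$, and because different blocks involve disjoint coordinates, the phases $\theta_k$ can be chosen independently so as to align the block maxima into a single constructive superposition.

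Applying the $m$-homogeneous Bohr inequality for polynomials in $\leq n$ variables to each $(f_m)_k$ and summing over $k$ by means of the norm identity yields exactly the same upper bound on $\sum_{\alpha \in \Lambda,\,|\alpha|=m}|c_\alpha(f)|$ in terms of $\|f_m\|_\infty$ as is available for $m$-homogeneous functions on $\mathbb{D}^n$. Piecing the different homogeneous parts together via Caratheodory's inequality $\|f_m\|_\infty \leq 2(1-|c_0|)$ for $m \geq 1$, exactly as in the proof of Proposition~\ref{reduction}, then delivers the Bohr inequality for $f$ at radius $K(\mathbb{N}_0^n) = L(\mathbb{N}(n))$, completing the lower bound.

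The hard part will be securing the disjoint-block norm identity $\|f_m\|=\sum_k\|(f_m)_k\|$: without it, a naive term-by-term application of Bohr's inequality to each block would introduce a factor equal to the (possibly infinite) number of blocks and destroy the bound. Once that identity is in place, the remaining combinatorics of combining the homogeneous parts follow precisely the argument of Proposition~\ref{reduction}, so the second statement of Proposition~\ref{reduction2} does the rest.
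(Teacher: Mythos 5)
Your upper bound is fine and is essentially the paper's: restrict to one block of maximal cardinality $n$ and invoke Proposition~\ref{BDB} plus permutation invariance. The block norm identity you isolate for the homogeneous parts is also correct (for $m\ge 1$ each $f_m$ splits over the disjoint blocks, and $m$-homogeneity lets you phase-align the blocks, working with finitely many blocks at a time so as to stay in $B_{c_0}$), but it is not where the real difficulty lies.

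The genuine gap is in the recombination step. Knowing, for every $m\ge 1$, that $\sum_{\alpha\in\Lambda,\,|\alpha|=m}|c_\alpha(f)|\,r^m\le \|f_m\|_\infty$ with $r=K(\mathbb{N}_0^n)$ does \emph{not} give the full Bohr inequality for $f$ at radius $K(\mathbb{N}_0^n)$: the only way you propose to reassemble the degrees is Caratheodory's bound $\|f_m\|_\infty\le 2(1-|c_0(f)|)$ ``exactly as in Proposition~\ref{reduction}'', and that argument needs the geometric factor $3^{-m}$ to make the series over $m$ summable. So your route yields at best $K\big(\bigcup_k\Lambda_k\big)\ge \tfrac{1}{3}K(\mathbb{N}_0^n)$ (likewise, Proposition~\ref{reduction2}(ii) only gives the $\tfrac13$-estimate), which falls short of the exact equality claimed in the proposition. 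The loss is intrinsic to the homogeneous decomposition: even in the trivial one-block case $\Lambda=\mathbb{N}_0^n$, homogeneous bounds plus Caratheodory do not recover the Bohr inequality at radius $K(\mathbb{N}_0^n)$. The paper avoids homogeneous parts altogether: after reducing to finitely many blocks (disjointness gives $\sup$ over a partial union $\le$ $\sup$ over the full union), it induces on the number $N$ of blocks, writing the function as $a_0+f_1(u_1)+\cdots+f_N(u_N)$ with $f_k(0)=0$, fixing $u_1,\dots,u_{N-1}$ and applying the \emph{full} Bohr inequality in the $n_N\le n$ variables of the last block to $\widetilde a_0+f_N(u_N)$ with $\widetilde a_0=a_0+\sum_{k<N}f_k(u_k)$. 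Because the Bohr inequality controls $|\widetilde a_0|$ itself on the left-hand side, the block estimates chain together with no constant loss, which is precisely what is needed for the equality $K\big(\bigcup_k\Lambda_k\big)=K(\mathbb{N}_0^n)$. To repair your proof you would need an argument of this telescoping type rather than a degree-by-degree recombination.
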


\noindent Clearly, Example \ref{another interesting example} is an immediate consequence of this result: put $P_k = \{p_k\}$ (the $k$-th prime) and apply Example~\ref{mother} together with Proposition~\ref{manolin}.

\begin{proof}
Define the sets  $\Lambda_k = \mathfrak{b}^{-1}(J_{P_k}) \subset \mathbb{N}_0^{(\mathbb{N})}$, and
recall that  $\mathbb{N}_0^n = \mathfrak{b}^{-1}(\mathbb{N}(n))$. Looking at Proposition \ref{BDB} it suffices to prove that
\[
K\big( \bigcup_k \Lambda_k \big) = K\big(\mathbb{N}_0^n \big)\,.
\]
Let
$I_k =\bigcup_{\alpha \in \Lambda_k }  \text{supp} \alpha \subset \mathbb{N}$
be the support of $\Lambda_k$. Clearly, we have $n_k:=|I_k|=|P_k|$ for all $k$.
We identify
$\spanned  \{e_i: i\in I_k\}$ with $\mathbb{C}^{n_k}\,.$\\
By considering bounded holomorphic functions with support in any  $I_k$ of length $n$, we get that $K\big( \bigcup_k \Lambda_k \big)\leq K\big(\mathbb{N}_0^n \big)$. We have to prove now the reverse inequality
\begin{equation} \label{jonny}
K\big(\mathbb{N}_0^n \big) \,\,\leq \,\,K\big( \bigcup_k \Lambda_k \big)\,.
\end{equation}
Now, we want to show that
\[
\sum_{\alpha\in \bigcup_k \Lambda_k }|a_\alpha|\, K\big(\mathbb{N}_0^n \big)^{|\alpha|}\le \sup_{z\in B_{c_0}}\Big | \sum_{\alpha\in \bigcup_k \Lambda_k } a_\alpha z^\alpha \Big |\,
\]
for every function $\sum_{\alpha\in \bigcup_k \Lambda_k } a_\alpha z^\alpha \in H_\infty(B_{c_0})$. Since the $\Lambda_k$'s are disjoint, we have
$$\sup_{z\in B_{c_0}}\Big |  \sum_{\alpha\in \bigcup_{k=1}^N \Lambda_k } a_\alpha z^\alpha \Big |
\le \sup_{z\in B_{c_0}}\Big |  \sum_{\alpha\in \bigcup_k \Lambda_k } a_\alpha z^\alpha \Big |$$
for all $N$, and then it will be enough to show that
\begin{equation}\label{finiteunion}
\sum_{\alpha\in \bigcup_{k=1}^N }|a_\alpha|\, K\big(\mathbb{N}_0^n \big)^{|\alpha|}\le\sup_{z\in B_{c_0}}\Big |  \sum_{\alpha\in \bigcup_{k=1}^N \Lambda_k } a_{\alpha} z^{\alpha} \Big |\,.
\end{equation}
We proceed now by induction on $N$.
For $N=1$, \eqref{finiteunion} is just a consequence of the following: $K(\mathbb N_0^n)\le K(\mathbb N_0^{n_1})=K(\Lambda_1)$.
For the inductive step, we write
$$ \sum_{\alpha\in \bigcup_{k=1}^N \Lambda_k } a_{\alpha} z^{\alpha} = a_0 + f_1(u_1) + \cdots + f_N(u_N)\,,$$ where
$u_k$ is the projection of $z$ in the $\Lambda_k$-coordinates
and
$$f_k(w)= \sum_{\substack{\alpha\in \mathbb{N}_0^{n_k}\\ |\alpha|\ge 1}} a^k_{\alpha} w^{\alpha}\,$$
for $w\in \mathbb C^{n_k}$. Note that $f_k(0)=0$ for every $k$.
By inductive hypothesis we know that
\begin{equation}\label{HI}
|a_0|+\sum_{k=1}^{N-1}\sum_{\substack{\alpha\in \mathbb{N}_0^{n_k}\\ |\alpha|\ge 1}} |a_{\alpha}|\, K(\mathbb N_0^n )^{|\alpha|}
\leq \sup_{u_1 \in \mathbb{D}^{n_1},\ldots, u_{N-1} \in \mathbb{D}^{n_{N-1}}}\Big|a_0 + \sum_{k=1}^{N-1}f_k(u_k)\Big|.
\end{equation}
Fix now $u_k \in \mathbb{D}^{n_k}$ for $k=1,\ldots, N-1$  and set $\widetilde a_0 = a_0+\sum_{k=1}^{N-1}f_k(u_k)$.
Since $K(\mathbb N_0^n)\le K(\mathbb N_0^{n_N})=K(\Lambda_N)$,
we have
\[
\big| \widetilde a_0 \big|+ \sum_{\substack{\alpha\in \mathbb{N}_0^{n_N}\\ |\alpha|\ge 1}} |a^N_{\alpha}|\, K(\mathbb N_0^n)^{|\alpha|}  \le   \sup_{u_{N} \in \mathbb{D}^{n_{N}}}\Big| \widetilde a_0 + f_N(u_N)\Big|\,,
\]
which just means that
\begin{equation}\label{HI2}
\big| a_0+\sum_{k=1}^{N-1}f_k(u_k)\big|+ \sum_{\substack{\alpha\in \mathbb{N}_0^{n_N}\\ |\alpha|\ge 1}} |a^N_{\alpha}|\, K(\mathbb N_0^n)^{|\alpha|}  \le   \sup_{u_{N} \in \mathbb{D}^{n_{N}}}\Big| \big(a_0+\sum_{k=1}^{N-1}f_k(u_k) \big) + f_N(u_N)\Big|\,.
\end{equation}
Combining \eqref{HI} and \eqref{HI2} we obtain \eqref{finiteunion}.
\end{proof}

\noindent In the following results we present asymptotically correct estimates on Dirichlet-Bohr radii.

\begin{example} \label{motherAA} \text{ }
\begin{itemize}
\item[(1)]
$\lim_n \frac{L\big(\mathbb{N}(n)\big)}{\sqrt{\frac{\log n}{n}}} =1$\,;
\item[(2)] There is a constant $C >1$ such that
\begin{align*}
C^{-m}   \Big( \frac{m}{n}\Big)^{\frac{m-1}{2m}}
\leq
&
L\Big( \big(\mathbb{N}(n)\big)[m]\Big)
\leq
C^{m}  \Big( \frac{m}{n}\Big)^{\frac{m-1}{2m}} \quad \,\,\text{ for  }  \quad  n > m
\\
C^{-m}
\leq
&
L\Big( \big(\mathbb{N}(n)\big)[m]\Big)
\leq
C^{m}
\quad
\quad \quad \quad \quad
\text{ for  }  \quad  n \leq m\,.
\end{align*}
\end{itemize}
\end{example}
\noindent
Both results follow from Proposition \ref{BDB} and their counterparts for Bohr radii:
\begin{equation*}
\lim_n \frac{K\big( \mathbb{N}_0^n  \big)}{\sqrt{\frac{\log n}{n}}} =1
\end{equation*}
and
\begin{align*}
C^{-m}   \Big( \frac{m}{n}\Big)^{\frac{m-1}{2m}}
\leq
&
K
\Big(
\big\{ \alpha \in \mathbb{N}_0^n \, \big| \, |\alpha| =m    \big\}
\Big)
\leq
C^{m}  \Big( \frac{m}{n}\Big)^{\frac{m-1}{2m}} \quad \,\,\text{ for  }  \quad  n > m
\\
C^{-m}
\leq
&
K
\Big(
\big\{ \alpha \in \mathbb{N}_0^n \, \big| \, |\alpha| =m    \big\}
\Big)
\leq
C^{m}
\quad
\quad \quad \quad \quad
\text{ for  }  \quad  n \leq m\,.
\end{align*}
The first formula is due to Bayart, Pellegrino, and Seoane-Sep\'ulveda \cite{BaPeSe13}, who improve an earlier result from \cite{DeFrOrOuSe11}). The upper estimate in the second  result follows  from \cite{DeFrOrOuSe11}, and the lower one is a  consequence of the Kahane-Salem-Zygmund inequality (or  \cite[Lemma 2.1 and (4.4)]{DeGaMa03}). It would be of particular interest
to know the precise values of $L\big(\mathbb{N}(n)\big)\,, $  $L\big(\mathbb{N}[m])\big) $ and $L\big( \big(\mathbb{N}(n)\big)[m]\big)$
for all/some $n,m >1$.

If Example \ref{motherAA} is combined with Proposition \ref{reduction2}, then we see the following examples.

\begin{example} \label{final example} \text{ }
\begin{itemize}
\item[(1)]
$L\big( \mathbb{N}  \big)=0$  \text{ }\,;\\
\item[(2)]
$L\big( \mathbb{N}[m] \big) =0\,\,\, \text{ for all  } \,\,\,m >1$\,.
\end{itemize}
\end{example}


\begin{thebibliography}{10}

\bibitem{Ai05}
L.~Aizenberg.
\newblock Generalization of {C}arath\'eodory's inequality and the {B}ohr radius
  for multidimensional power series.
\newblock In {\em Selected topics in complex analysis}, volume 158 of {\em
  Oper. Theory Adv. Appl.}, pages 87--94. Birkh\"auser, Basel, 2005.

\bibitem{BaCaQu06}
R.~Balasubramanian, B.~Calado, and H.~Queff{\'e}lec.
\newblock The {B}ohr inequality for ordinary {D}irichlet series.
\newblock {\em Studia Math.}, 175(3):285--304, 2006.

\bibitem{BaDeFrMaSe}
F.~Bayart, A.~Defant, L.~Frerick, M.~Maestre, and P.~Sevilla-Peris.
\newblock Multipliers of dirichlet series and monomial series expansions of
  holomorphic functions in infinitely many variables.
\newblock {\em preprint}, 2014.

\bibitem{BaPeSe13}
F.~Bayart, D.~Pellegrino, and J.~B. Seoane-Sep\'ulveda.
\newblock The {B}ohr radius of the $n$-dimensional polydisk is equivalent to
  $\sqrt{(\log n)/n}$.
\newblock {\em preprint}, 2014.

\bibitem{BoKh97}
H.~P. Boas and D.~Khavinson.
\newblock Bohr's power series theorem in several variables.
\newblock {\em Proc. Amer. Math. Soc.}, 125(10):2975--2979, 1997.

\bibitem{Bo13_Goett}
H.~Bohr.
\newblock {\"U}ber die {B}edeutung der {P}otenzreihen unendlich vieler
  {V}ariablen in der {T}heorie der \textit{{D}irichlet}--schen {R}eihen
  $\sum\,\frac{a_n}{n^s}$.
\newblock {\em Nachr. Ges. Wiss. G{\"o}ttingen, Math. Phys. Kl.}, pages
  441--488, 1913.

\bibitem{Bo13}
H.~Bohr.
\newblock {\"U}ber die gleichm{\"a}{\ss}ige {K}onvergenz {D}irichletscher
  {R}eihen.
\newblock {\em J. Reine Angew. Math.}, 143:203--211, 1913.

\bibitem{Bo14}
H.~Bohr.
\newblock A theorem concerning power series.
\newblock {\em Proc. London Math. Soc. (2)}, 13:1--5, 1914.

\bibitem{Br08}
R.~de~la Bret{\`e}che.
\newblock Sur l'ordre de grandeur des polyn\^omes de {D}irichlet.
\newblock {\em Acta Arith.}, 134(2):141--148, 2008.

\bibitem{DeFrOrOuSe11}
A.~Defant, L.~Frerick, J.~Ortega-Cerd{\`a}, M.~Ouna{\"{\i}}es, and K.~Seip.
\newblock The {B}ohnenblust-{H}ille inequality for homogeneous polynomials is
  hypercontractive.
\newblock {\em Ann. of Math. (2)}, 174(1):485--497, 2011.

\bibitem{DeGaMa03}
A.~Defant, D.~Garc{\'i}a, and M.~Maestre.
\newblock Bohr's power series theorem and local {B}anach space theory.
\newblock {\em J. Reine Angew. Math.}, 557:173--197, 2003.

\bibitem{DeScSe14}
A.~Defant, U.~Schwarting and P. Sevilla-Peris.
\newblock Estimates for vector valued Dirichlet polynomials.
\newblock {\em Monatsh. Math.}, to appear, 2014.

\bibitem{DiTi89}
S.~Dineen and R.~M. Timoney.
\newblock Absolute bases, tensor products and a theorem of {B}ohr.
\newblock {\em Studia Math.}, 94(3):227--234, 1989.

\bibitem{HeLiSe97}
H.~Hedenmalm, P.~Lindqvist, and K.~Seip.
\newblock A {H}ilbert space of {D}irichlet series and systems of dilated
  functions in {$L^2(0,1)$}.
\newblock {\em Duke Math. J.}, 86(1):1--37, 1997.

\bibitem{KoQu01}
S.~V. Konyagin and H.~Queff{\'e}lec.
\newblock The translation {$\frac12$} in the theory of {D}irichlet series.
\newblock {\em Real Anal. Exchange}, 27(1):155--175, 2001/02.

\bibitem{Pr57}
K.~Prachar.
\newblock {\em Primzahlverteilung}.
\newblock Springer-Verlag, Berlin, 1957.

\bibitem{Qu95}
H.~Queff{\'e}lec.
\newblock H. {B}ohr's vision of ordinary {D}irichlet series; old and new
  results.
\newblock {\em J. Anal.}, 3:43--60, 1995.

\bibitem{QuQu13}
H.~Queff{\'e}lec and M.~Queff{\'e}lec.
\newblock {\em Diophantine approximation and {D}irichlet series}, volume~2 of
  {\em Harish-Chandra Research Institute Lecture Notes}.
\newblock Hindustan Book Agency, New Delhi, 2013.

\end{thebibliography}
\end{document}